\patchcmd\algocf@Vline{\vrule}{\vrule \kern-0.4pt}{}{}
\patchcmd\algocf@Vsline{\vrule}{\vrule \kern-0.4pt}{}{}
\newcommand{\verbatimfont}[1]{\renewcommand{\verbatim@font}{\ttfamily#1}}
\patchcmd{\@algocf@start}{-1.5em}{0em}{}{}
\newcommand\mykwcomment{\quad$\smalltriangleright$\ }
\newcommand\mycomment[1]{\hfill\Comment{#1}}
\newtheorem{corollary}{Corollary}
\newtheorem{proposition}{Proposition}
\newtheorem{lemma}{Lemma}
\newtheorem{definition}{Definition}
\newtheorem{remark}{Remark}
\crefname{theorem}{theorem}{theorems}
\crefname{corollary}{corollary}{corollaries}
\crefname{proposition}{proposition}{propositions}
\crefname{lemma}{lemma}{lemmas}
\crefname{assumption}{assumptions}{assumptions}
\crefname{definition}{definition}{definitions}
\crefname{example}{example}{examples}
\crefname{remark}{remark}{remarks}
\Crefname{theorem}{Theorem}{Theorems}
\Crefname{corollary}{Corollary}{Corollaries}
\Crefname{proposition}{Proposition}{Propositions}
\Crefname{lemma}{Lemma}{Lemmas}
\Crefname{assumption}{Assumption}{Assumptions}
\Crefname{definition}{Definition}{Definitions}
\Crefname{example}{Example}{Examples}
\Crefname{remark}{Remark}{Remarks}
\newcommand\defeq{\triangleq}
\newcommand\rlddots{\mathinner{
    \kern1mu\raise1pt{.}
    \kern2mu\raise4pt{.}
    \kern2mu\raise7pt{\Rule{0pt}{7pt}{0pt}.}
    \kern1mu
}}
\newcommand\posdef{\succ}
\newcommand\negdef{\prec}
\newcommand\lt{<}
\newcommand\gt{>}
\newcommand\hess{\nabla^2}
\newcommand\Reals{\mathrm{I\hspace{-2pt}R}}
\newcommand\Diag{\mathbb{D}}
\newcommand\Lower{\mathbb{L}}
\newcommand\Upper{\mathbb{U}}
\newcommand\R{\Reals}
\newcommand\minimize{\operatorname*{\mathbf{minimize}}}
\newcommand\subjto{\operatorname*{\mathbf{subject\;to}}}
\newcommand\I{\mathrm{I}}
\newcommand\diag{\operatorname{\mathbf{diag}}}
\newcommand\inv[1]{#1^{-\hspace{-1pt}1}}
\newcommand\tp[1]{#1^\top}
\newcommand\ttp[1]{#1^{\!\top\!}}
\newcommand\landauO{\mathcal{O}}
\newcommand\hookuparrow{%
  \mathchoice
    {\scalebox{0.86}{\rotatebox[origin=c]{90}{$\displaystyle\hookrightarrow$}}}
    {\scalebox{0.86}{\rotatebox[origin=c]{90}{$\textstyle\hookrightarrow$}}}
    {\scalebox{0.86}{\rotatebox[origin=c]{90}{$\scriptstyle\hookrightarrow$}}}
    {\scalebox{0.86}{\rotatebox[origin=c]{90}{$\scriptscriptstyle\hookrightarrow$}}}
}
\newcommand\assign{\leftarrow}
\newcommand\setJ{\mathcal{J}}
\newcommand\Upsx[1]{\Upsilon_{\hspace{-1.5pt}x\hspace{-1pt},#1}}
\newcommand\Upsu[1]{\Upsilon_{\hspace{-1.5pt}u\hspace{-0.5pt},#1}}
\newcommand\Qx[1]{\breve Q_{\hspace{-0.1pt}x\hspace{-1pt},#1\hspace{0.5pt}}}
\newcommand\Qu[1]{\breve Q_{\hspace{-0.2pt}u\hspace{-0.5pt},#1}}
\newcommand\LPxx[1]{L_{x\hspace{-1pt}x\hspace{-1pt},#1}}
\newcommand\LPxu[1]{L_{x\hspace{-1pt}u\hspace{-0.5pt},#1}}
\newcommand\LPuu[1]{L_{u\hspace{-1pt}u\hspace{-0.5pt},#1}}
\newcommand\LPTxx[1]{\tp L_{x\hspace{-1pt}x\hspace{-1pt},#1}}
\newcommand\tilLPxx[1]{{\tilde L}_{x\hspace{-1pt}x\hspace{-1pt},#1}}
\newcommand\tilLPxu[1]{{\tilde L}_{x\hspace{-1pt}u\hspace{-0.5pt},#1}}
\newcommand\tilLPuu[1]{{\tilde L}_{u\hspace{-1pt}u\hspace{-0.5pt},#1}}
\newcommand\tilLPTxx[1]{\tp {\tilde L}_{x\hspace{-1pt}x\hspace{-1pt},#1}}
\newcommand\Ssign{\mathscr S}
\newcommand\smallonesigma{\left(\! \begin{smallmatrix} 1 \\ & \Sigma \end{smallmatrix} \!\right)}
\newcommand\Cpp{\texttt{C+\hspace{-0.5pt}+}}
\newcommand\smalltriangleright{\triangleright}
\newcommand\QPALMOCP{\textsc{qpalm-ocp}}
\newcommand\QPALM{\textsc{qpalm}}
\newdimen\@IEEEtmpitemindent
\def\@begintheorem#1#2{\@IEEEtmpitemindent\itemindent\topsep 3pt\rmfamily\trivlist%
    \item[\hskip \labelsep{\indent\itshape #1\ #2:}]\itemindent\@IEEEtmpitemindent}
\def\@opargbegintheorem#1#2#3{\@IEEEtmpitemindent\itemindent\topsep 5pt\rmfamily \trivlist%
    \item[\hskip\labelsep{\indent\itshape #1\ #2\ (#3):}]\itemindent\@IEEEtmpitemindent}
\def\@endtheorem{\endtrivlist\vskip 3pt\unskip}
\title{\LARGE \bf
Blocked Cholesky factorization updates
of the Riccati recursion using hyperbolic Householder transformations
}
\author{Pieter Pas \quad and \quad Panagiotis Patrinos
\thanks{This work was supported by
FWO PhD grant 11M9523N; FWO projects G081222N, G0A0920N, G033822N;
KU Leuven internal funding C14/24/103, C14/18/068, 12Y7622N;
the European Union's Horizon 2020 research and innovation programme under the Marie Skłodowska-Curie grant agreement No. 953348;
and EuroHPC Project 101118139 Inno4Scale.
}
\thanks{The authors are with Department of Electrical Engineering (ESAT), STADIUS Center for
Dynamical Systems, Signal Processing and Data Analytics,
KU Leuven, Kasteelpark Arenberg 10, 3001 Leuven, Belgium. (e-mail: \texttt{\{pieter.pas,panos.patrinos\}@esat.kuleuven.be})}%
}
\begin{document}

\maketitle
\thispagestyle{empty}
\pagestyle{empty}

\begin{abstract}
Newton systems in quadratic programming (QP) methods are often solved using direct
Cholesky or $LD\ttp L$ factorizations. When the linear systems in successive
iterations differ by a
low-rank modification (as is common in active set and augmented Lagrangian
methods), updating the existing factorization can offer significant
performance improvements over recomputing a full Cholesky factorization.
We review the hyperbolic Householder transformation, and demonstrate its
usefulness in describing low-rank Cholesky factorization updates.
By applying this hyperbolic Householder-based framework to the well-known
Riccati recursion for solving saddle-point problems with optimal control structure,
we develop a novel algorithm for updating the factorizations used in optimization solvers for
optimal control. Specifically, the proposed method can be used to
efficiently solve the semismooth Newton systems that are at the core of the
augmented Lagrangian-based QPALM-OCP solver. An optimized open-source implementation
of the proposed factorization update routines is provided as well.
\end{abstract}

\section{Introduction}

In many types of numerical optimization solvers,
the most computationally demanding steps are the construction and factorization of the
linear system that determines a Newton step.
In practice, such a system could be solved using a Cholesky decomposition $L\ttp L$ or $LD\ttp L$,
where $L$ is lower triangular and $D$ diagonal. When two Newton systems in
successive iterations of the solver are similar, it may be advantageous to
\textit{update} the existing factorization from the previous iteration, rather
than to perform a new Cholesky decomposition in the current iterate.
Active set solvers are one prominent class of methods where this is the case:
from one iteration to the next, a single constraint (or a small number of constraints)
is added to or removed from the active set, and this translates to the addition of a low-rank term
in the Newton system \cite{wright_applying_1996}. Techniques for computing the updated Cholesky factor $\tilde L$ of the
matrix $L\ttp L + \sigma a\ttp a$ (for a known triangular factor $L$ and a rank-one update by a vector $a$) are well known:
an early overview of various methods was given by Gill et al. in \cite{gill_methods_1974}.
Their factorization update methods have seen widespread use in active set \cite{kirches_factorization_2011,nielsen_low-rank_2018},
augmented Lagrangian \cite{hermans_qpalm_2022,lowenstein_qpalm-ocp_2024} and even interior point solvers \cite{frey_active-set_2020}.
Since a factorization update for a rank-one modification of an $n$-by-$n$ matrix can be computed in $\landauO(n^2)$
operations as opposed to $\landauO(n^3)$ operations for a full Cholesky factorization, using them effectively can significantly improve
solver run times.

An alternative framework for describing factorization updates is based on
hyperbolic Householder transformations, a generalization of the standard orthogonal
Householder transformations often used to compute $QR$ factorizations.
This framework was originally developed by Rader and Steinhardt for adding and removing data in least squares
problems for signal processing applications \cite{rader_hyperbolic_1986}.
Subsequent studies analyzed the numerical properties of hyperbolic Householder transformations
for the problem of updating Cholesky factorizations \cite{bojanczyk_stability_1991,stewart_hyperbolic_1998}.
This Householder-based approach to factorization updates naturally
generalizes to the case where multiple rank-one matrices are added or subtracted
at once. That is, it provides an intuitive way of representing the low-rank
updating problem
\begin{equation} \label{eq:fact-update-intro}
    \tilde L \ttp{\tilde L} = L\ttp L + A\Sigma \ttp A,
\end{equation}
where we wish to find the lower triangular Cholesky factor $\tilde L$
of the matrix $L\ttp L + A\Sigma \ttp A \in \R^{n\times n}$, for some known Cholesky decomposition $L\ttp L$
and a low-rank modification by the update matrix $A\in \R^{n\times m}$ and a
diagonal matrix $\Sigma$. We will refer to this problem simply as the \textit{updating}
problem, even though the specific case where $\Sigma \negdef 0$ is sometimes referred to as a \textit{downdating} problem.
Factorization updates of the form \eqref{eq:fact-update-intro} are of interest to us,
because they match exactly the type of low-rank modifications that are encountered
in the inner semismooth Newton method at the core of the
augmented Lagrangian-based \QPALM{} solver \cite[\S4.1.2]{hermans_qpalm_2022}.

The hyperbolic Householder framework has two main advantages over the methods from \cite{gill_methods_1974}:
1. It simplifies the notation in theoretical derivations, providing a convenient mental model
for reasoning about factorization updates;
and 2. by merging multiple rank-one updates in a single operation,
it enables high-performance practical implementations.
While such blocked implementations built on top of BLAS have been shown to scale well for very large matrices \cite{van_de_geijn_high-performance_2011},
optimized versions for smaller matrices (such as the ones used in optimal control)
are not yet available.

\subsection*{Contributions}

The goal of the present paper is threefold:
\begin{itemize}
    \item Provide a self-contained overview of hyperbolic Householder transformations,
    specifically in the context of the Cholesky factorization updating problem \eqref{eq:fact-update-intro}.
    (\S\ref{sec:householder}, \S\ref{sec:algorithms})
    \item Present a practical use case for the hyperbolic Householder framework,
    using it to derive a novel method for updating the Cholesky factorizations
    in a solver for saddle-point problems with optimal control structure,
    based on the well-known Riccati recursion. (\S\ref{sec:riccati})
    \item Demonstrate the practical performance of hyperbolic Householder operations
    using optimized, open-source \Cpp{} implementations of the presented algorithms. (\S\ref{sec:performance})
\end{itemize}
The Riccati-based factorization update routines, along with their efficient
implementation, offer a compelling approach for solving Newton systems in the
recently proposed \QPALMOCP{} solver for linear-quadratic optimal control problems \cite{lowenstein_qpalm-ocp_2024}.

\subsection*{Notation}

We use $\Lower$, $\Upper$ and $\Diag$ to denote the sets of lower triangular, upper triangular
and diagonal matrices respectively.
The symbol $\mathbf{e}_1$ denotes the first standard basis vector $(1\; 0\; ...\; 0)$.
For vectors, $x_i$ refers to the $i$-th element of $x$.
For block matrices, $A_{ij}$ refers to the subblock in block row $i$ and block column $j$ of $A$.
In pseudocode, an empty matrix $(\,)$ represents the neutral element for block matrix concatenation.
We use tildes to denote ``updated'' matrices.
Subscripts in parentheses denote the iteration index in an algorithm,
and in the context of optimal control,
a superscript (for vectors) or subscript (for matrices) is used for the stage index.

\section{Hyperbolic Householder transformations} \label{sec:householder}

\subsection{Construction and properties}

\begin{definition}[$\Ssign$-orthogonality] \label{def:s-orth}
    Given a diagonal matrix $\Ssign \in \Diag(\R^n)$,
    we say that a matrix $\breve Q \in \R^{n\times n}$ is $\Ssign$-orthogonal if
    \begin{equation} \label{eq:s-orthogonality}
        \breve Q \Ssign \ttp{\breve Q} = \Ssign.
    \end{equation}
\end{definition}
In the case where $\Ssign = \I$, \Cref{def:s-orth} reduces to the definition of standard orthogonality, $\breve Q\tp{\breve Q}=\I$.
It is easy to show that if $\breve Q_{(1)}$ and $\breve Q_{(2)}$ are both $\Ssign$-orthogonal, then their product
$\breve Q_{(1)} \breve Q_{(2)}$ is also $\Ssign$-orthogonal.

An important application of standard Householder transformations is the
introduction of zeros during e.g. QR factorization. 
Additionally, their orthogonality allows efficient
application of their inverses.
The following lemma shows that a generalized \textit{hyperbolic}
Householder transformation can be constructed, with properties similar to the
standard Householder transformation:
A hyperbolic Householder matrix is $\Ssign$-orthogonal, and can be constructed
to reduce all but the first element of a given vector to zero.
\begin{lemma}[Hyperbolic Householder reflectors]\label{lem:hyper-householder}
    Consider a matrix $\Ssign \defeq \smallonesigma \in \Diag(\R^{n})$ and a vector $x \in \R^{n}$
    with $\ttp x \Ssign x \gt 0$.
    Then the matrix $\breve Q \in \R^{n\times n}$ defined
    below is $\Ssign$-orthogonal, and satisfies $\ttp {\breve Q} x = \eta \mathbf{e}_1$.
    \begin{equation} \label{eq:def-q-hyper-house-lem}
        \breve Q \defeq \I - \frac{2}{\ttp v\Ssign v}\, \Ssign v \ttp v, \hspace{0.5em}
        \begin{aligned}
            &\text{where}\hspace{-0.7em}&
            v &\defeq x - \eta \mathbf{e}_1, \\[-0.4em]
            &\text{and} & \eta &\defeq -\operatorname{sign}(x_1)\sqrt{\ttp x \Ssign x}.\,\footnotemark
        \end{aligned}
        \footnotetext{Using $\operatorname{sign}(0) = 1$.}
    \end{equation}
\end{lemma}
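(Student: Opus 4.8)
The plan is to verify the two claims—$\Ssign$-orthogonality and $\ttp{\breve Q}x = \eta\mathbf{e}_1$—by direct computation, after first confirming that $\breve Q$ is well defined. The whole argument hinges on one scalar identity, so I would compute that first. Setting $\beta \defeq \ttp v \Ssign v$, I would expand $v = x - \eta\mathbf{e}_1$ and use the three facts that $\Ssign$ is diagonal with $\Ssign_{11}=1$ (so $\ttp{\mathbf{e}_1}\Ssign x = x_1$ and $\ttp{\mathbf{e}_1}\Ssign\mathbf{e}_1 = 1$) together with $\eta^2 = \ttp x \Ssign x$ to obtain $\beta = 2(\ttp x \Ssign x - \eta x_1)$. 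Because $\eta x_1 = -|x_1|\sqrt{\ttp x \Ssign x}$, this gives $\beta = 2\big(\ttp x \Ssign x + |x_1|\sqrt{\ttp x \Ssign x}\big) \gt 0$ whenever $\ttp x \Ssign x \gt 0$. This confirms that the denominator in the definition of $\breve Q$ is nonzero, so $\breve Q$ exists, and it explains why the sign of $\eta$ is chosen opposite to that of $x_1$ (to add rather than cancel the two terms).

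For $\Ssign$-orthogonality, I would write $\breve Q = \I - \frac{2}{\beta}\Ssign v \ttp v$ and note that, since $\Ssign$ is symmetric, $\ttp{\breve Q} = \I - \frac{2}{\beta} v \ttp v \Ssign$. Multiplying out $\breve Q \Ssign \ttp{\breve Q}$ produces the identity term $\Ssign$, two identical cross terms $-\frac{2}{\beta}\Ssign v \ttp v \Ssign$, and a quadratic term $\frac{4}{\beta^2}\Ssign v (\ttp v \Ssign v)\ttp v \Ssign$. The key simplification is that $\ttp v \Ssign v = \beta$ is a scalar, collapsing the quadratic term to $\frac{4}{\beta}\Ssign v \ttp v \Ssign$, which exactly cancels the two cross terms and leaves $\Ssign$, as required.

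For the reflection property, I would apply $\ttp{\breve Q} = \I - \frac{2}{\beta} v \ttp v \Ssign$ to $x$, giving $\ttp{\breve Q}x = x - \frac{2}{\beta} v (\ttp v \Ssign x)$. Computing $\ttp v \Ssign x = \ttp x \Ssign x - \eta x_1 = \beta/2$ shows the scalar coefficient of $v$ is exactly $1$, so $\ttp{\breve Q}x = x - v = \eta\mathbf{e}_1$.

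None of the steps is genuinely hard; the main thing to be careful about is that, unlike the standard orthogonal Householder matrix, $\breve Q$ is not symmetric because of the left factor $\Ssign$, so the transpose must be tracked explicitly and the relation $\breve Q\Ssign\ttp{\breve Q}=\Ssign$ (rather than $\breve Q \ttp{\breve Q} = \I$) is what must be verified. The only place the hypothesis $\ttp x \Ssign x \gt 0$ is essential is in guaranteeing $\beta \gt 0$, i.e.\ that $\breve Q$ is well defined; everything else is algebra.
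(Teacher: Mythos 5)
Your proposal is correct and takes essentially the same route as the paper: a direct algebraic verification of $\breve Q \Ssign \ttp{\breve Q} = \Ssign$ and of $\ttp{\breve Q}x = x - v = \eta\mathbf{e}_1$, hinging on the same scalar identity $\ttp v \Ssign x = \tfrac12 \ttp v \Ssign v$, with well-definedness secured by the sign choice of $\eta$ exactly as in the paper (you express the key quantities in terms of $x$ where the paper uses $v$, but that is only a cosmetic reorganization).
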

\begin{proof}
    The $\Ssign$-orthogonality of $\breve Q$ and the value of $\ttp{\breve Q} x$
    follow from straightforward algebraic manipulations. \\[0.2em]
    $\phantom{ll}
            \breve Q \Ssign \ttp{\breve Q}
            = \Ssign - {\tfrac{2\Ssign v \ttp v \Ssign}{\ttp v\Ssign v}}
            - {\tfrac{2\Ssign v \ttp v \Ssign}{\ttp v\Ssign v}} +
            {\tfrac{2\Ssign v}{\ttp v\Ssign v}{
            (\ttp v\Ssign v)
            \tfrac{2\ttp v \Ssign}{\ttp v\Ssign v}}} = \Ssign.
    $ \\[0.2em]
    Since $\ttp v \Ssign v = \ttp x \Ssign x - 2 \ttp x \Ssign \eta \mathbf{e}_1 + \eta^2 = 2 \eta^2 - 2 x_1 \eta = -2 \eta v_1$, \\[0.2em]
    $\phantom{m}
            \ttp {\breve Q} x
            = x - \tfrac{2\ttp v \Ssign x}{\ttp v\Ssign v} v  
            = x - \tfrac{2\ttp v \Ssign (v + \eta \mathbf{e}_1)}{\ttp v\Ssign v} v 
            = x - v = \eta \mathbf{e}_1.
    $ \\[0.2em]
    Finally, the choice of the sign of $\eta$ ensures that $-2x_1\eta \ge 0$, thus $\ttp v \Ssign v \gt 0$, and $\breve Q$ is well-defined.
\end{proof}
\begin{remark}[Hyperbolic Householder matrix construction]
    Even without knowing $\breve Q$, its $\Ssign$-orthogonality and the fact
    that only the first component of $\ttp{\breve Q}x$ is nonzero are sufficient to
    determine (up to a sign) the value of this component $\eta$:
    $\ttp x \Ssign x = \ttp x \breve Q \Ssign \ttp {\breve Q} x = \eta^2$.
    The construction of $\breve Q$ in \Cref{lem:hyper-householder} is based on that of the classical Householder reflector
    \cite[Fig.~10.1]{trefethen_numerical_1997}, generalized for quadratic forms using $\Ssign$ rather than the Euclidean inner product.
\end{remark}
\begin{remark}[Alternative definitions] \label{rem:sign-q}
    Rader and Steinhardt originally defined the following symmetric hyperbolic Householder matrix
    $
        \breve Q_{\mathrm{RS}} = \Ssign - \tfrac{2}{\ttp v \Ssign v} v \ttp v
    $
    \cite[Eq.~28]{rader_hyperbolic_1986}.
    When $\Ssign$ consists only of diagonal elements $\pm 1$, which
    is the case considered in \cite{rader_hyperbolic_1986}, their definition is related to
    \eqref{eq:def-q-hyper-house-lem} through the identity $\breve Q = \Ssign \breve Q_{\mathrm{RS}}$, which can be seen using $\Ssign^2 = \I$.
    The definition in \eqref{eq:def-q-hyper-house-lem} is inspired by the one used in \cite[\S3]{stewart_hyperbolic_1998},
    and even though symmetry is lost, it has the advantage of generalizing to matrices $\Ssign$ with entries different from $\pm1$.
    We note that $\breve Q$ and $\eta$ in \Cref{lem:hyper-householder} are unique only up to some signs, and similar results can be formulated if $\breve Q$ is multiplied by an $\Ssign$-orthogonal matrix $\diag({}\pm\!1\; \dots\, {}\pm\!1)$.
\end{remark}

The hyperbolic Householder transformation introduced in \Cref{lem:hyper-householder}
serves as a building block for an $\Ssign$-orthogonal generalization of the $QR$
factorization or its transpose, the $LQ$ factorization.
The $L\breve Q$ factorization described by the following proposition (with 
$L$ lower triangular and $\breve Q$ $\Ssign$-orthogonal)
enables $\Ssign$-orthogonal triangularization, a key step in performing
Cholesky factorization updates.
\begin{proposition}[Hyperbolic $L \breve Q$ factorization]
    \label{prop:hyper-qr}
    Consider a matrix $\begin{pmatrix}
        L & A
    \end{pmatrix}$ with $L \in \Lower(\R^{n\times n})$ and $A \in \R^{n\times m}$,
    and a diagonal matrix $\Sigma \in \Diag(\R^m)$ for which $L\ttp L + A\Sigma \ttp A \posdef 0$.
    Then there exists an $\Ssign$-orthogonal matrix $\breve Q \in \R^{(n+m)\times (n+m)}$
    with $\Ssign = \left(\! \begin{smallmatrix} \I_n \\ & \Sigma \end{smallmatrix} \!\right)$
    such that
    \begin{equation}
        \begin{pmatrix}
            L & A
        \end{pmatrix} \breve Q = \begin{pmatrix}
            \tilde L & 0
        \end{pmatrix},
    \end{equation}
    with $\tilde L \in \Lower(\R^{n\times n})$ lower triangular.
\end{proposition}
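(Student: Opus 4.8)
The plan is to prove existence constructively, by induction on the dimension $n$, using \Cref{lem:hyper-householder} to annihilate the update block $A$ one row at a time while keeping the $L$ block lower triangular. For the base case $n = 1$, the matrix $\begin{pmatrix} L & A\end{pmatrix}$ is a single row; writing $x \defeq \ttp{\begin{pmatrix}L & A\end{pmatrix}} \in \R^{1+m}$, the hypothesis $L\ttp L + A\Sigma\ttp A \posdef 0$ becomes $\ttp x\Ssign x = L\ttp L + A\Sigma\ttp A > 0$, so \Cref{lem:hyper-householder} supplies an $\Ssign$-orthogonal $\breve Q$ with $\ttp{\breve Q}x = \eta\mathbf{e}_1$, equivalently $\ttp x\breve Q = \begin{pmatrix}\eta & 0\end{pmatrix}$, and $\tilde L = (\eta)$ works.

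For the inductive step I would first apply a hyperbolic reflector acting only on the ``active'' columns $\{1\}\cup\{n+1,\dots,n+m\}$ — the first column of $L$ together with all of $A$ — leaving the already-triangular columns $\{2,\dots,n\}$ fixed. The vector to reduce, $x$, is the transpose of the first row of $\begin{pmatrix}L & A\end{pmatrix}$ restricted to these columns; since $L$ is lower triangular it satisfies $\ttp x\Ssign' x = (L\ttp L + A\Sigma\ttp A)_{11}$ with $\Ssign' = \left(\begin{smallmatrix}1 \\ & \Sigma\end{smallmatrix}\right)$, a diagonal entry of a positive-definite matrix and hence strictly positive, so \Cref{lem:hyper-householder} applies. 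Embedding the resulting $\Ssign'$-orthogonal reflector as the identity on the inactive columns gives a full matrix $\breve Q_{(1)}$ that is $\Ssign$-orthogonal — here I use that $\Ssign$ is diagonal, hence block-diagonal for the active/inactive partition and equal to the identity on the inactive block. After this step the first row is $\begin{pmatrix}\eta & 0 & \cdots & 0\end{pmatrix}$, the columns $\{2,\dots,n\}$ are untouched (still lower triangular below row $1$), and the remaining rows form $\begin{pmatrix}L_{bb} & \tilde A\end{pmatrix}$ with $L_{bb}\in\Lower(\R^{(n-1)\times(n-1)})$.

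The crux — and the step I expect to be the main obstacle — is showing that the deflated sub-problem inherits positive-definiteness, namely $L_{bb}\ttp{L_{bb}} + \tilde A\Sigma\ttp{\tilde A}\posdef 0$, so that the induction hypothesis applies. I would obtain this from the invariance $M^{(1)}\Ssign\ttp{M^{(1)}} = M\breve Q_{(1)}\Ssign\ttp{\breve Q_{(1)}}\ttp M = M\Ssign\ttp M = L\ttp L + A\Sigma\ttp A\posdef 0$, where $M\defeq\begin{pmatrix}L & A\end{pmatrix}$ and $M^{(1)}\defeq M\breve Q_{(1)}$. In $1+(n-1)$ block form this positive-definite matrix has leading block $\eta^2$, off-diagonal block $\eta\ell$ (with $\ell$ the sub-diagonal part of the first column of $M^{(1)}$), and trailing block $\ell\ttp\ell + L_{bb}\ttp{L_{bb}} + \tilde A\Sigma\ttp{\tilde A}$; the target quantity $L_{bb}\ttp{L_{bb}} + \tilde A\Sigma\ttp{\tilde A}$ is therefore exactly the Schur complement of the leading block $\eta^2 > 0$, which is positive definite because the full matrix is.

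Finally, I would invoke the induction hypothesis to obtain a reflector $\breve Q''$ (orthogonal with respect to $\left(\begin{smallmatrix}\I_{n-1} \\ & \Sigma\end{smallmatrix}\right)$) that triangularizes $\begin{pmatrix}L_{bb} & \tilde A\end{pmatrix}$, embed it as the identity on the first column to get an $\Ssign$-orthogonal $\breve Q_{(2)}$, and set $\breve Q\defeq\breve Q_{(1)}\breve Q_{(2)}$, which is $\Ssign$-orthogonal as a product of $\Ssign$-orthogonal matrices. Because $\breve Q_{(2)}$ fixes the first column, the result is $\begin{pmatrix}L & A\end{pmatrix}\breve Q = \begin{pmatrix}\tilde L & 0\end{pmatrix}$ with $\tilde L = \left(\begin{smallmatrix}\eta & 0 \\ \ell & \tilde L_{bb}\end{smallmatrix}\right)$ lower triangular, which completes the induction.
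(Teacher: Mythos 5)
Your proposal is correct and follows essentially the same route as the paper's own proof: induction on $n$, using \Cref{lem:hyper-householder} on the first row (with positivity of the pivot coming from the $(1,1)$ entry of $L\ttp L + A\Sigma\ttp A$), embedding the reflector as the identity on the untouched columns, and establishing positive definiteness of the deflated subproblem via the Schur complement of $\eta^2$ in the $\Ssign$-invariant product $M^{(1)}\Ssign\ttp{M^{(1)}} = L\ttp L + A\Sigma\ttp A$. The only differences are cosmetic (your ``active columns'' framing versus the paper's explicit embedded matrix $\breve Q^{\hookuparrow}_{(n)}$, and notation $\eta$, $L_{bb}$ in place of $\tilde\lambda$, $L'$).
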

\begin{proof}
    \renewcommand*{\arraystretch}{1}
    We will recursively construct $\breve Q$ as a product of $\Ssign$-orthogonal
    transformations, each annihilating one row of $A$.
    The base case for size $n=1$ is covered by \Cref{lem:hyper-householder}. For $n>1$, partition the given matrices as follows:
    \begin{equation} \label{eq:proof-hyper-qr-partition}
        \left(\begin{array}{@{}c|c|c@{}}
            \lambda & & \ttp a \\\hline
            \ell & L' & A'
        \end{array}\right) \defeq \begin{pmatrix}
            L & A
        \end{pmatrix},
        \hspace{0.5em}
        \begin{aligned}
            &\text{with } \lambda \in \R, \ell \in \R^{n-1}, \\[-0.3em]
            &L' \in \Lower(\R^{(n-1)\times (n-1)}), \\[-0.4em]
            &a \in \R^{m}, A' \in \R^{(n-1)\times m}.
        \end{aligned}\hspace{-0.4em}
    \end{equation}
    From the assumption $\tilde H \defeq L\ttp L + A\Sigma\ttp A \posdef 0$, we know that the top left element of $\tilde H$ is strictly positive \cite[Prop.~8.2.8]{bernstein_matrix_2009}, i.e.
    $\ttp x \smallonesigma x > 0$ with $x \defeq \left( \begin{smallmatrix}
        \lambda \\ a
    \end{smallmatrix} \right)$. We can therefore invoke \Cref{lem:hyper-householder} to construct a
    $\smallonesigma$-orthogonal
    matrix $\breve Q_{(n)}$ that annihilates $a$, i.e.
    $
        \left(\begin{array}{@{}cc@{}}
            \lambda & \ttp a
        \end{array}\right) \breve Q_{(n)} =
        \left(\begin{array}{@{}cc@{}}
            \tilde\lambda & 0
        \end{array}\right),
    $ with $\tilde \lambda = -\operatorname{sign}(\lambda)\sqrt{\lambda^2 + \ttp a \Sigma a}$.
    Partition $\breve Q_{(n)}$ by isolating its first row and column, and embed it
    into a larger matrix $\breve Q^{\hookuparrow}_{(n)} \in \R^{(n+m)\times(n+m)}$ which is clearly $\Ssign$-orthogonal:
    \begin{equation*}
        \begin{aligned}
            \breve Q^{\hookuparrow}_{(n)} &\defeq
        \scalebox{0.85}{\setstretch{1.2}$
        \begin{pmatrix}
            \breve q_{(n),11} & & \tp {\breve q}_{(n),12} \\
            & \I_{n-1} & \\
            \breve q_{(n),21} & & \breve Q_{(n),22} \\
        \end{pmatrix}$}, &
        \begin{aligned}
            &\text{with } \breve q_{(n),11} \in \R, \\[-0.2em]
            &\breve q_{(n),12},\, \breve q_{(n),21} \in \R^m.
        \end{aligned}
    \end{aligned}
    \end{equation*}
    Defining $\begin{pmatrix}
        \tilde \ell & \tilde A'
    \end{pmatrix} \defeq \begin{pmatrix}
        \ell & A'
    \end{pmatrix} \breve Q_{(n)}$, we have that
    \begin{equation} \label{eq:hyper-qr-first-row-annihilated}
        \renewcommand*{\arraystretch}{1.2}
        \begin{pmatrix}
            L & A
        \end{pmatrix} \breve Q^{\hookuparrow}_{(n)} = \scalebox{0.85}{$\left(\begin{array}{@{}c|c|c@{}}
            \tilde\lambda & & \\\hline
            \tilde\ell & L' & \tilde A'
        \end{array}\right)$}.
    \end{equation}
    $\breve Q_{(n)}$ reduced all nondiagonal nonzeros in the top row to zero, and computed the first column of $\tilde L$.
    To proceed, we will assume inductively that \Cref{prop:hyper-qr} holds for size $n-1$,
    so that we can use it to annihilate $\tilde A'$ and to update $L'$. To apply the proposition, we require $\tilde H' \defeq L' L^{\prime\top} + \tilde A' \Sigma \tilde A^{\prime\top} \posdef 0$.
    This holds, because $\tilde H'$ is the Schur complement of $\tilde\lambda^2$ in
    \begin{equation} \label{eq:proof-prop-schur-complement}
        \renewcommand*{\arraystretch}{1.2}
        \begin{aligned}
            &\phantom{=}
            \scalebox{0.85}{$\left(\begin{array}{@{}c|c@{}}
                \tilde\lambda^2 & \tilde\lambda \tp{\tilde\ell} \\\hline
                \tilde\lambda\tilde\ell & 
                \tilde\ell \tp{\tilde\ell} + \tilde H'
            \end{array}\right)$} = 
            \scalebox{0.85}{$\left(\begin{array}{@{}c|c|c@{}}
                \tilde\lambda & & \\\hline
                \tilde\ell & L' & \tilde A'
            \end{array}\right)$} \Ssign \scalebox{0.85}{$\left(\begin{array}{@{}c|c|c@{}}
                \tilde\lambda & & \\\hline
                \tilde\ell & L' & \tilde A'
            \end{array}\right)$}^{\!\!\top} \\
            &\overset{\eqref{eq:hyper-qr-first-row-annihilated}}=\;
            \begin{pmatrix}
                L & A
            \end{pmatrix}
            \breve Q^{\hookuparrow}_{(n)} \Ssign \breve Q^{\hookuparrow\top}_{(n)}
            \ttp{\begin{pmatrix}
                L & A
            \end{pmatrix}} = \tilde H,
        \end{aligned}
    \end{equation}
    where we used the $\Ssign$-orthogonality of $\breve Q^{\hookuparrow}_{(n)}$ in the last step.
    Since $\tilde H \posdef 0$ and $\tilde\lambda^2>0$, $\tilde H' = \tilde H / \tilde\lambda^2 \posdef 0$
    \cite[Thm.~7.7.7]{horn_matrix_2012}.
    Recursive application of the proposition then provides us with an $\left(\!\begin{smallmatrix}
        \I_{n-1} \\ & \Sigma
    \end{smallmatrix}\!\right)$-orthogonal matrix $\breve Q'$ that satisfies
    \begin{equation} \label{eq:hyper-qr-bottom-right-annihilated}
        \begin{pmatrix}
            L' & \tilde A'
        \end{pmatrix} \breve Q' = \begin{pmatrix}
            \tilde L' & 0
        \end{pmatrix}.
    \end{equation}
    The matrix $\breve Q'$ can again be embedded into a larger $\Ssign$-orthogonal matrix $\breve Q^{\hookuparrow}_{(n-1,1)} \defeq
    \left(\!\begin{smallmatrix} 1 \\ & \breve Q' \end{smallmatrix}\!\right)$.
    Therefore,
    \begin{equation} \label{eq:product-q-proof-qr}
        \renewcommand*{\arraystretch}{1.2}
        \begin{aligned}
            \begin{pmatrix} L & A \end{pmatrix}
            \underbrace{\breve Q^{\hookuparrow}_{(n)}\breve Q^{\hookuparrow}_{(n-1,1)}}_{\mathclap{\defeq \breve Q}}
                \;&\overset{\eqref{eq:hyper-qr-first-row-annihilated}}=\;
            \scalebox{0.85}{$\left(\begin{array}{@{}c|c|c@{}}
                \tilde\lambda & & \\\hline
                \tilde\ell & L' & \tilde A'
            \end{array}\right)$} \scalebox{0.85}{$\begin{pmatrix}
                1 \\ & \breve Q'
            \end{pmatrix}$} \\[-0.9em]
            &\overset{\eqref{eq:hyper-qr-bottom-right-annihilated}}=\;
            \scalebox{0.85}{$\left(\begin{array}{@{}c|c|c@{}}
                \tilde\lambda & & \\\hline
                \tilde\ell & \tilde L' & \mathclap{\hspace{7pt}0}\phantom{\tilde A'}
            \end{array}\right)$}.
        \end{aligned}
    \end{equation}
    Defining
    $\tilde L\defeq \left( \!\begin{smallmatrix}
        \tilde\lambda \\ \tilde\ell & \tilde L'
    \end{smallmatrix} \!\right)$ concludes the proof.
\end{proof}

\begin{corollary}[Factorization updates] \label{cor:fact-update} \strut\\
    Consider a Cholesky factor $L\in \Lower(\R^{n\times n})$ of the symmetric positive definite matrix
    $H \defeq L\ttp L$.
    Let $\tilde H \defeq H + A \Sigma \ttp A$ be positive definite
    for a given update matrix $A \in \R^{n\times m}$ and a diagonal matrix $\Sigma \in \Diag(\R^{m})$.
    Then there exists an $\Ssign$-orthogonal matrix $\breve Q$
    with $\Ssign \defeq \left(\!\begin{smallmatrix}
        \I_n \\ & \Sigma
    \end{smallmatrix}\!\right)$,
    such that the matrix $\tilde L \in \Lower(\R^{n\times n})$ defined below is a Cholesky factor of $\tilde H$.
    \begin{equation} \label{eq:factup-prop}
        \begin{pmatrix}
            \tilde L & 0
        \end{pmatrix}
        \defeq
        \begin{pmatrix}
            L & A
        \end{pmatrix} \breve Q.
    \end{equation}
\end{corollary}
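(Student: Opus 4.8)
The plan is to recognize this as an almost immediate consequence of \Cref{prop:hyper-qr}. First I would check that the hypotheses of the corollary supply exactly what the proposition requires: since $L \in \Lower(\R^{n\times n})$ is lower triangular and $\tilde H = H + A\Sigma\ttp A = L\ttp L + A\Sigma\ttp A \posdef 0$ by assumption, the pair $\begin{pmatrix} L & A\end{pmatrix}$ together with $\Sigma$ satisfies the proposition's premise verbatim. Invoking \Cref{prop:hyper-qr} therefore yields an $\Ssign$-orthogonal matrix $\breve Q$ (with $\Ssign = \left(\!\begin{smallmatrix} \I_n \\ & \Sigma\end{smallmatrix}\!\right)$) and a lower triangular $\tilde L$ satisfying $\begin{pmatrix} \tilde L & 0\end{pmatrix} = \begin{pmatrix} L & A\end{pmatrix}\breve Q$, which is precisely the defining relation \eqref{eq:factup-prop}. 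The existence of $\breve Q$ and the lower-triangularity of $\tilde L$ are thus obtained for free.

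The only thing left to verify is that this $\tilde L$ is genuinely a Cholesky factor of $\tilde H$, i.e.\ that $\tilde L\ttp{\tilde L} = \tilde H$. The key computation runs through the $\Ssign$-orthogonality relation $\breve Q \Ssign \ttp{\breve Q} = \Ssign$ from \Cref{def:s-orth}. Exploiting the block structure of $\Ssign$, I would first note that the trailing $m$ columns on which $\Sigma$ acts are zero, so that $\tilde L\ttp{\tilde L} = \begin{pmatrix} \tilde L & 0\end{pmatrix}\Ssign\ttp{\begin{pmatrix} \tilde L & 0\end{pmatrix}}$. Substituting the definition $\begin{pmatrix} \tilde L & 0\end{pmatrix} = \begin{pmatrix} L & A\end{pmatrix}\breve Q$ and applying $\breve Q\Ssign\ttp{\breve Q} = \Ssign$ collapses the right-hand side to $\begin{pmatrix} L & A\end{pmatrix}\Ssign\ttp{\begin{pmatrix} L & A\end{pmatrix}} = L\ttp L + A\Sigma\ttp A = \tilde H$, closing the chain of equalities.

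I do not anticipate any genuine obstacle here; the corollary is essentially a restatement of \Cref{prop:hyper-qr} with the added observation that $\Ssign$-orthogonal triangularization preserves the $\Ssign$-weighted Gram matrix $\begin{pmatrix} L & A\end{pmatrix}\Ssign\ttp{\begin{pmatrix} L & A\end{pmatrix}}$. The one point requiring mild care is the dimensional bookkeeping, ensuring that the zero block of $\begin{pmatrix} \tilde L & 0\end{pmatrix}$ correctly absorbs the $\Sigma$-weighting within $\Ssign$; once that is in place, the entire argument reduces to a single line of block-matrix algebra.
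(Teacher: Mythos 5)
Your proposal is correct and follows exactly the paper's own argument: invoke \Cref{prop:hyper-qr} (whose hypotheses are met verbatim, since $\tilde H = L\ttp L + A\Sigma\ttp A \posdef 0$) to obtain the $\Ssign$-orthogonal $\breve Q$ and lower triangular $\tilde L$, then use $\breve Q\Ssign\ttp{\breve Q} = \Ssign$ together with the zero block absorbing the $\Sigma$-weighting to conclude $\tilde L\ttp{\tilde L} = \begin{pmatrix} L & A\end{pmatrix}\Ssign\ttp{\begin{pmatrix} L & A\end{pmatrix}} = \tilde H$. This is precisely the one-line block-matrix computation in the paper's proof, so nothing further is needed.
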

\begin{proof}
    \Cref{prop:hyper-qr} ensures existence of such $\breve Q$.
    Multiplying both sides of \eqref{eq:factup-prop} by $\Ssign$ and by
    their own transposes:
    \begin{equation}
        \begin{aligned}[t]
            \begin{pmatrix}
                \tilde L & 0
            \end{pmatrix} \Ssign
            \ttp {\begin{pmatrix}
                \tilde L & 0
            \end{pmatrix}}
            &=
            \begin{pmatrix}
                L & A
            \end{pmatrix} \breve Q \Ssign \ttp {\breve Q}
            \ttp{\begin{pmatrix}
                L & A
            \end{pmatrix}} \\[-0.2em]
            \overset{\eqref{eq:s-orthogonality}}{\Leftrightarrow} \quad
            \tilde L \ttp {\tilde L}
            &=
            \begin{pmatrix}
                L & A
            \end{pmatrix}
            \begin{pmatrix}
                \!\begin{smallmatrix}
                    \I \\ & \Sigma
                \end{smallmatrix}\!\!
            \end{pmatrix}
            \ttp {\begin{pmatrix}
                L & A
            \end{pmatrix}}
            = \tilde H.
        \end{aligned}
        \vspace{-1.6em}
    \end{equation}
\end{proof}
\vspace{0.2em}
There are two key parts to \Cref{cor:fact-update}:
First, \textit{if} there exists
an $\left(\!\begin{smallmatrix}
    \I \\ & \Sigma
\end{smallmatrix}\!\right)$-orthogonal transformation $\breve Q$ that reduces the matrix
$(L \;\; A)$ to a lower triangular matrix $(\tilde L \;\; 0)$, then this matrix $\tilde L$
is a Cholesky factor of the updated matrix $L\ttp L + A\Sigma\ttp A$. Second,
\Cref{prop:hyper-qr} confirms that such a transformation $\breve Q$ \textit{does}
exist if
the updated matrix is positive definite.
Moreover, its proof describes the construction of $\breve Q$ and $\tilde L$.

\subsection{Efficient representation and application of hyperbolic Householder transformations}

We now discuss some practical considerations regarding optimized software
routines based on (hyperbolic) Householder transformations:
{} 1. Applying an individual hyperbolic Householder transformation for each
    column limits the performance of numerical implementations. Instead we will consider a blocked
    variant that applies the product of multiple elementary hyperbolic Householder
    matrices at once.
{} 2. The choice of $\breve Q$ in \Cref{lem:hyper-householder} flips the
    sign of the first component of $x$. Since diagonal elements of
    Cholesky factors are conventionally taken to be positive, we will
    preserve their sign during the update by negating the first column of $\breve Q$ (cfr. \Cref{rem:sign-q}).
{} 3. The Householder reflection vectors $v$ from \eqref{eq:def-q-hyper-house-lem}
    will be normalized so their first element $v_1$ equals one.
    This simplifies the blocked variant,
    and allows the storage of $A$ to be reused for the reflection vectors.

Specifically, we represent the hyperbolic Householder matrix that annihilates the $i$-th row of $A$ as follows:
\newcommand\bi[1]{b_{\hspace{-1pt}(#1)}^{\!\phantom\top}}
\newcommand\taui[1]{\inv\tau_{\hspace{-1pt}(#1)}}
\newcommand\biT[1]{\ttp b_{\hspace{-1pt}(#1)}}
\begin{equation} \label{eq:def-q-i}
    \breve Q_{(i)} \defeq
    \scalebox{0.85}{$\begin{pmatrix}
        \I_{i-1} \\
        & \taui{i} - 1 & & -\taui{i} \biT{i} \\
        & & \I_{n - i} \\
        & \taui{i} \Sigma \bi{i} & & \I - \taui{i} \Sigma \bi{i} \biT{i}
    \end{pmatrix}$},
\end{equation}
with $\taui{i} = \frac{2v_1}{\ttp v \Ssign v}$ and $\left(\!\!\begin{smallmatrix}
    1 \\ b_{(i)}
\end{smallmatrix}\!\!\right) = \inv v_1 v$ for
$v$ defined in \eqref{eq:def-q-hyper-house-lem}.

The recursive nature of the proof of \Cref{prop:hyper-qr} motivates a
recursive expression for the product of the hyperbolic Householder matrices that is
constructed in \eqref{eq:product-q-proof-qr}.
\begin{lemma}[Product of hyperbolic Householder matrices] \label{lem:block-householder}
    The product of matrices $\breve Q_{(i)}$ as defined by \eqref{eq:def-q-i}
    is given by
    \begin{equation*}
        \begin{aligned}
            \breve Q_{(1)} \cdots \breve Q_{(k)}
            &=
            \scalebox{0.84}{$
            \begin{pmatrix}
                \inv T_{(k)} - \I & & -\inv T_{(k)} B_{(k)} \\
                & \I_{n-k-1} \\
                \Sigma \ttp B_{(k)} \inv T_{(k)} & & \I - \Sigma \ttp B_{(k)} \inv T_{(k)} B_{(k)}
            \end{pmatrix},
            $}
        \end{aligned}
    \end{equation*}
    where $
    \begin{aligned}[t]
        B_{(1)} &\defeq \biT{1},
        &T_{(1)} &\defeq \tau_{(1)}, \\
        B_{(i)} &\defeq \scalebox{0.84}{$\begin{pmatrix}
            B_{(i - 1)} \\
            \biT{i}
        \end{pmatrix}$}, & T_{(i)} &\defeq \scalebox{0.84}{$\begin{pmatrix}
            T_{(i - 1)} & B_{(i-1)} \Sigma \bi{i} \\
            & \tau_{(i)}
        \end{pmatrix}$}.
    \end{aligned}
    $
\end{lemma}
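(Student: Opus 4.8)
The plan is to establish the identity by induction on $k$, leaning on the recursive definitions of $B_{(k)}$ and $T_{(k)}$. The base case $k = 1$ is immediate: since $T_{(1)} = \tau_{(1)}$ is scalar, the claimed right-hand side has diagonal block $\inv{T_{(1)}} - \I = \inv{\tau_{(1)}} - 1$, top-right block $-\inv{T_{(1)}} B_{(1)} = -\inv{\tau_{(1)}} \ttp{b_{(1)}}$, bottom-left block $\Sigma \ttp{B_{(1)}} \inv{T_{(1)}} = \inv{\tau_{(1)}} \Sigma b_{(1)}$, and bottom-right block $\I - \inv{\tau_{(1)}} \Sigma b_{(1)} \ttp{b_{(1)}}$, which is exactly $\breve Q_{(1)}$ from \eqref{eq:def-q-i}.

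For the inductive step, set $P_{(k)} \defeq \breve Q_{(1)} \cdots \breve Q_{(k)} = P_{(k-1)} \breve Q_{(k)}$ and assume the formula for $P_{(k-1)}$. The main nuisance is that the block partition of $P_{(k-1)}$ (with leading block of size $k - 1$) is offset from that of $\breve Q_{(k)}$ (whose pivot is coordinate $k$). I would therefore refine both factors into a common $4 \times 4$ partition with row and column sizes $(k-1,\, 1,\, n-k,\, m)$: the first $k-1$ coordinates are already processed, coordinate $k$ is the current pivot, the next $n-k$ coordinates carry the untouched identity, and the last $m$ coordinates form the update block. In this partition $\breve Q_{(k)}$ is the identity apart from the entries coupling the pivot coordinate to the update block, so multiplying $P_{(k-1)}$ on the right by $\breve Q_{(k)}$ only disturbs a few blocks, while the blocks $\Sigma \ttp{B_{(k-1)}} \inv{T_{(k-1)}}$ and $\inv{T_{(k-1)}} - \I$ pass through unchanged to become the leading portions of the size-$k$ blocks.

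The crux is inverting $T_{(k)}$: it is block upper triangular with diagonal blocks $T_{(k-1)}$ and $\tau_{(k)}$ and off-diagonal block $B_{(k-1)} \Sigma b_{(k)}$, so its inverse is block upper triangular with $(1,1)$ block $\inv{T_{(k-1)}}$, $(2,2)$ block $\inv{\tau_{(k)}}$, and $(1,2)$ block $-\inv{\tau_{(k)}} \inv{T_{(k-1)}} B_{(k-1)} \Sigma b_{(k)}$. Substituting this into the target form for $P_{(k)}$ and expanding along the refined partition produces precisely the blocks obtained from multiplying out $P_{(k-1)} \breve Q_{(k)}$; stacking $\ttp{b_{(k)}}$ beneath $B_{(k-1)}$ to form $B_{(k)}$ is exactly what welds the leading $(k-1)$-block to the pivot row. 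I expect the bottom-right $m \times m$ block to demand the most care: there one multiplies $\I - \Sigma \ttp{B_{(k-1)}} \inv{T_{(k-1)}} B_{(k-1)}$ by $\I - \inv{\tau_{(k)}} \Sigma b_{(k)} \ttp{b_{(k)}}$, yielding four terms that must be reassembled into $\I - \Sigma \ttp{B_{(k)}} \inv{T_{(k)}} B_{(k)}$; expanding the latter through the block inverse of $T_{(k)}$ surfaces the same four terms, which closes the induction.
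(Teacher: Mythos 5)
Your proof is correct and takes essentially the same route as the paper's: induction on the number of factors, an inductive step carried out by block multiplication of the assumed form of the partial product with the next reflector $\breve Q_{(k)}$, and closure of the induction via the block upper triangular inversion formula relating the resulting blocks to $\inv T_{(k)}$. The only cosmetic difference is direction — the paper names the block triangular matrix arising from the product and then observes it equals $\inv T_{(k+1)}$, whereas you expand $\inv T_{(k)}$ first and match it against the product — which is the same computation.
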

\begin{proof}
    By induction. The base case $\breve Q_{(1)}$ is trivial using \eqref{eq:def-q-i}.
    Then assume that \Cref{lem:block-householder} holds for factors 1 through $k$, and
    carry out the block matrix multiplication $\big(\breve Q_{(1)} \cdots \breve Q_{(k)}\big)\, \breve Q_{(k + 1)}$.
    We find that this product equals
    \begin{equation*}
        \begin{aligned}
            \scalebox{0.84}{$
            \left(\begin{array}{@{}c|c|c@{}}
                U_{(k+1)} - \I & & 
                -U_{(k+1)} B_{(k+1)}  \\\hline
                 & \I_{n-k-2} & \\\hline
                \Sigma \ttp B_{(k+1)} U_{(k+1)} & & \I - \Sigma \ttp B_{(k+1)} U_{(k+1)} B_{(k+1)}
            \end{array}\right),
            $}
        \end{aligned}
    \end{equation*}
    where we introduced $U_{(k+1)} \defeq 
        \scalebox{0.80}{$
        \begin{pmatrix}
            \inv T_{(k)} & -\inv T_{(k)} B_{(k)} \Sigma \bi{k+1} \taui{k+1} \\
            &  \taui{k+1}
        \end{pmatrix}$}$
    and
    $B_{(k+1)}
    $ as defined above.
    Observing that $U_{(k+1)} = \inv T_{(k+1)}$ completes the proof \cite[Fact~2.17.1]{bernstein_matrix_2009}.
\end{proof}
\begin{remark}
    The construction in \Cref{lem:block-householder}, using the product of the reflection vectors $B$ and a triangular matrix $U$,
    is based on the \textit{compact WY representation} of standard Householder matrices developed
    in \cite{schreiber_storage-efficient_1989}.
    The insight of using a recursion in terms of $T_{(i)}$ rather than
    $U_{(i)}$ was first published in \cite{puglisi_modification_1992}.
    Note that application of $\inv T_{(k)}$ is implemented
    using back substitution; no explicit inversion is performed.
\end{remark}

Thanks to this representation of hyperbolic Householder products,
application of $\breve Q$ to a given matrix requires mostly matrix--matrix
multiplications. This results in high arithmetic intensity, and optimal performance on modern processors. In contrast,
sequential application of individual Householder transformations is generally limited by memory bandwidth.

\section{Algorithms for factorization updates} \label{sec:algorithms}

The constructive proof of \Cref{prop:hyper-qr} can be leveraged to turn
\Cref{cor:fact-update} into a practical method for performing updates
of a given Cholesky factorization. This section presents
two core algorithms and later combines them into an efficient blocked factorization update procedure. 

\subsection{Two core algorithms, two perspectives}

Given a triangular matrix $L_{11} \in \Lower(\R^{n\times n})$,
an update matrix $A \in \R^{n\times m}$ and a diagonal matrix $\Sigma \in \Diag(\R^m)$,
\underline{\Cref{alg:stab-hyp-house-diag-block}} computes the
$\left( \!\begin{smallmatrix}
    \I \\ & \Sigma
\end{smallmatrix} \!\right)$-orthogonal
matrix $\breve Q$ and the updated lower triangular matrix
$\tilde L_{11} \in  \Lower(\R^{n \times n})$ such that, as in \Cref{cor:fact-update},
\begin{equation}
    \begin{aligned}
        \begin{pmatrix}
            \tilde L_{11} & 0
        \end{pmatrix} &= \begin{pmatrix}
            L_{11} & A_1
        \end{pmatrix} \breve Q.
    \end{aligned}
\end{equation}
In other words, given a Cholesky factor $L_{11}$ of $H$, \Cref{alg:stab-hyp-house-diag-block}
computes a Cholesky factor $\tilde L_{11}$ of $\tilde H \defeq H + A_1 \Sigma \ttp A_1$. \\[0.5em]
\underline{\Cref{alg:stab-hyp-house-tail-block}} uses \Cref{lem:block-householder} to apply the matrix $\breve Q$
generated by \Cref{alg:stab-hyp-house-diag-block} to matrices $L_{21} \in \R^{l\times n}$ and $A_2 \in \R^{l\times m}$:
\begin{equation}
    \begin{pmatrix}
        \tilde L_{21} & \tilde A_2
    \end{pmatrix} = \begin{pmatrix}
        L_{21} & A_2
    \end{pmatrix} \breve Q.
\end{equation}

\begin{algorithm2e}
    \small
    \caption{Block hyperbolic Householder update}
    \label{alg:stab-hyp-house-diag-block}
    \setstretch{1.2}
    \DontPrintSemicolon
    \KwIn{$L_{11} \in \Lower(\R^{n\times n})$, \hspace{0.3em} $A_1 \in \R^{n\times m}$, \hspace{0.3em} $\Sigma \in \Diag(\R^{m})$.}
    \KwOut{$\tilde L_{11} \in \Lower(\R^{n\times n})$, \hspace{0.3em} $B_{\phantom 1} \in \R^{n\times m}$, \hspace{0.3em} $T \in \Upper(\R^{n\times n})$.}
    \vspace{0.4em}
    \KwResult{\hspace{-3em}$
        \begin{aligned}[t]
            &\begin{pmatrix}
                \tilde L_{11} & 0
            \end{pmatrix} = \begin{pmatrix}
                L_{11} & A_1
            \end{pmatrix} \breve Q, \\
            \text{where } &\breve Q = \scalebox{0.75}{$\begin{pmatrix}
                \inv T - \I & -\inv T B \\
                \Sigma \ttp B \inv T & \I - \Sigma \ttp B \inv T B
            \end{pmatrix}$} \text{ is } \left(\!\begin{smallmatrix}
                \I \\ & \Sigma
            \end{smallmatrix}\!\right)\text{-orthogonal.}
        \end{aligned}
    $\\\vspace{0.3em}\rule{3.1em}{0.5pt}}
    \vspace{0.3em}
    Initialize
    $L_{(1)} \assign L_{11}$
    \hspace{3pt}
    and
    \hspace{3pt}
    $A_{(1)} \assign A_1$
    \;
    Initialize
    $\tilde L_{(0)} \assign (\,)$
    \hspace{8.4pt}
    and
    \hspace{3pt}
    $B_{(0)} \assign (\,)$
    \hspace{3pt}
    and
    \hspace{3pt}
    $T_{(0)} \assign (\,)$
    \;
    \For{$k = 1,2,\; ...\; n$}{
        \renewcommand*{\arraystretch}{1.08}
        Partition
        $\scalebox{0.85}{$\left(\begin{array}{@{}c|c@{}}
            \lambda \\\hline
            \ell & L_{(k+1)}
        \end{array}\right)$} \assign L_{(k)}$
        \hspace{3pt}and\hspace{3pt}
        $\scalebox{0.85}{$\left(\begin{array}{@{}c@{}}
            \ttp a \\\hline
            A^\prime
        \end{array}\right)$} \assign A_{(k)}$ \hspace{8cm}
        \phantom{m} \scalebox{0.9}{with $\lambda \in \R$, $a \in \R^m$}\;
        $\alpha^2 \assign \ttp a \Sigma a$, \quad
        $\tilde \lambda \assign \operatorname{sign}(\lambda) \sqrt{\lambda^2 + \alpha^2}$,
        \quad
        $\beta \assign \lambda + \tilde\lambda$\;\vspace{0.1em}
        $\taui{k} \assign \frac{2\beta^2}{\alpha^2 + \beta^2}$, \quad
        $\bi{k} \assign \inv\beta a$\mycomment{\textcolor{gray}{\small Construct $\breve Q_{(k)}$}}\vspace{0.3em}
        $w \assign \taui{k} \left( \ell + A^\prime \Sigma b_{(k)} \right)$\;
        $\left(\begin{array}{@{}c|c@{}}
            \tilde \ell & A_{(k+1)} \vphantom{\biT{k}}
        \end{array}\right) \assign \left(\begin{array}{@{}c|c@{\,}}
            w - \ell & A^\prime - w\,\biT{k}
        \end{array}\right)$\mycomment{\textcolor{gray}{\small Apply $\breve Q_{(k)}$}}
        $\tilde L_{(k)} \assign \scalebox{0.85}{\setstretch{1}$\left(\begin{array}{@{}c|c@{}}
             \\
            \tilde L_{(k-1)} & \tilde \lambda \\
            & \tilde \ell
        \end{array}\right)$}$\mycomment{\textcolor{gray}{\small Build updated Cholesky factor}}
        $B_{(k)} \assign \scalebox{0.85}{$\left(\begin{array}{@{}c@{}}
            B_{(k-1)} \\\hline
            \biT{k}
        \end{array}\right)$}$,\quad
        $T_{(k)} \assign \scalebox{0.85}{$\left(\begin{array}{@{}c|c@{}}
            {\hspace{1.8pt}}T_{(k-1)} & B_{(k-1)} \Sigma \bi{k} \\\hline
            & \tau_{(k)}
        \end{array}\right)$}$
        \;
    }
    $\tilde L_{11} = \tilde L_{(n)}$
    \hspace{3pt}
    and
    \hspace{3pt}
    $B = B_{(n)}$
    \hspace{3pt}
    and
    \hspace{3pt}
    $T = T_{(n)}$
    \;
\end{algorithm2e}
\begin{algorithm2e}
    \small
    \caption{Block hyperbolic Householder apply}
    \label{alg:stab-hyp-house-tail-block}
    \setstretch{1.2}
    \DontPrintSemicolon
    \KwIn{$L_{21} \in \R^{l\times n}$, \hspace{0.3em} $A_2 \in \R^{l\times m}$, \hspace{0.3em} $\Sigma \in \Diag(\R^{m})$,
    \phantom{$L_{21} \in \R^{l\times n}$, \hspace{0.4em}} $B_{\phantom 1} \in \R^{n\times m}$, \hspace{0.05em} $T \in \Upper(\R^{n\times n})$.}
    \KwOut{$\tilde L_{21} \in \R^{l\times n}$, \hspace{0.3em} $\tilde A_2 \in \R^{l\times m}$.}
    \vspace{0.4em}
    \KwResult{\hspace{-0.2em}$
        \begin{aligned}[t]
            &\begin{pmatrix}
                \tilde L_{21} & \tilde A_2
            \end{pmatrix} = \begin{pmatrix}
                L_{21} & A_2
            \end{pmatrix} \breve Q, \\
            &\text{with } \breve Q = \scalebox{0.75}{$\begin{pmatrix}
                \inv T - \I & -\inv T B \\
                \Sigma \ttp B \inv T & \I - \Sigma \ttp B \inv T B
            \end{pmatrix}$}.
        \end{aligned}
    $\\\vspace{0.3em}\rule{3.1em}{0.5pt}}
    \vspace{0.3em}
    $W \assign \left( L_{21} + A_2 \Sigma \ttp B \right)\inv T$\;
    $\tilde A_2 \assign A_2 - W B$\;
    $\tilde L_{21} \assign W - L_{21}$\;
\end{algorithm2e}

\begin{remark}[Two viewpoints of \Cref{alg:stab-hyp-house-diag-block}] \label{rem:viewpoints}
    One way to view \Cref{alg:stab-hyp-house-diag-block} is simply as a
    function that maps a given Cholesky factor and update matrix to an updated Cholesky factor, i.e.
    $(L,\, A,\, \Sigma) \mapsto \tilde L = \operatorname{chol}(L\ttp L + A\Sigma \ttp A)$.
    However,
    \Cref{alg:stab-hyp-house-diag-block}
    is more than a black box that performs the update: it also
    records all operations needed for the update in a linear transformation $\breve Q$
    that can be applied efficiently to other matrices as well (not just to $L$ and $A$).
    This is a powerful perspective, because it enables blocked variants
    of the Cholesky factorization update procedures that are both more efficient
    in practice, and that support the derivation of factorization updates for
    the Riccati recursion in \Cref{sec:riccati}.
\end{remark}

\subsection{Blocked algorithm} \label{sec:blocked-alg}

As suggested by \Cref{rem:viewpoints},
a combination of \Cref{alg:stab-hyp-house-diag-block,alg:stab-hyp-house-tail-block} can be used to develop a
\textit{blocked} variant of the factorization update procedure,
where the Cholesky factors and the update matrix are partitioned as follows:
\begin{equation}
    \left(\begin{array}{@{}c|c@{}}
        L & A
    \end{array}\right) =
    \scalebox{0.8}{$\left(\begin{array}{@{}cc|c@{}}
        L_{11} & & A_1 \\
        L_{21} & L_{22} & A_2
    \end{array}\right)$},
\end{equation}
with $L_{11}\in\Lower(\R^{r\times r}), A_1\in\R^{r\times m}$,
for some block size $r$.
The updated Cholesky factor $\tilde L = \operatorname{chol}(L\ttp L + A\Sigma \ttp A)$
can then be computed by iterating the following steps:
\begin{equation*}
    \begin{aligned}
        &0. \quad \text{Partition }
        \raisebox{1pt}{\scalebox{0.8}{$\left(\begin{array}{@{}cc|c@{}}
            L_{11} & & A_1 \\
            L_{21} & L_{22} & A_2
        \end{array}\right)$}}
        \assign
        \left(\begin{array}{@{}c|c@{}}
            L & A
        \end{array}\right); \\[-0.25em]
        &1. \quad \text{Find $\breve Q$ such that }\begin{pmatrix}
            \tilde L_{11} & 0
        \end{pmatrix} = \begin{pmatrix}
            L_{11} & A_1
        \end{pmatrix} \breve Q; \\
        &2. \quad \text{Compute }\begin{pmatrix}
            \tilde L_{21} & \tilde A_2
        \end{pmatrix} = \begin{pmatrix}
            L_{21} & A_2
        \end{pmatrix} \breve Q; \\
        &3. \quad \text{Go to 0. with } \left(\begin{array}{@{}c|c@{}}
            L & A
        \end{array}\right) \assign \left(\begin{array}{@{}c|c@{}}
            L_{22} & \tilde A_2
        \end{array}\right).
\end{aligned}
\end{equation*}
Each iteration updates the $r$ leftmost columns of $L$, and the procedure is
then applied recursively to the remainder $L_{22}$ until all columns of $L$
have been updated (at which point $L_{22}$ is empty).
\Cref{alg:stab-hyp-house-diag-block,alg:stab-hyp-house-tail-block} are used in
tandem to perform steps 1 and 2 respectively, ensuring that $\breve Q$ is
$\left( \!\begin{smallmatrix}
    \I \\ & \Sigma
\end{smallmatrix} \!\right)$-orthogonal. These two steps can be viewed as
a single multiplication $\left(\!\begin{smallmatrix}
    \tilde L_{11} & 0 \\
    \tilde L_{21} & \tilde A_2
\end{smallmatrix}\!\right) = \left(\!\begin{smallmatrix}
    L_{11} & A_1 \\
    L_{21}{}\vphantom{\tilde L_{11}} & A_2
\end{smallmatrix}\!\right) \breve Q$.
The proof for this blocked algorithm is a straightforward generalization of the proof of \Cref{prop:hyper-qr}.
\footnote{Apply Prop.\,\ref{prop:hyper-qr} to $(L_{11} \,|\, A_1)$ instead of applying Lem.\,\ref{lem:hyper-householder} to $(\lambda, a)$.}.

In the case where $\Sigma=\pm\I$, this algorithm requires
$mn^2 + \tfrac{r-1}4 n^2 + \tfrac{r-1}2 mn - \tfrac{r^2 - 3r + 2}{4}n$ fused multiply-add operations,
$\tfrac12 n^2 + mn + \tfrac12nr + \landauO(n)$ multiplications, $\tfrac12 n^2 + \landauO(n)$ additions, $2n$ divisions, and $n$ square roots.
For $r=1$, this amounts to $mn^2 + n^2 + mn + \landauO(n)$ operations in total.

\section{Application to the Riccati recursion} \label{sec:riccati}

As a relevant application of Cholesky factorization updates, we consider the
solution of the structured Newton systems that arise when applying an augmented
Lagrangian method (ALM) such as \QPALMOCP{} to a linear-quadratic optimal control
problem (OCP) with inequality constraints, as described in \cite{lowenstein_qpalm-ocp_2024}.
We reuse the OCP definition from \cite[Eq.~1]{lowenstein_qpalm-ocp_2024}, and define the Jacobians of the dynamics $F_j \defeq {\begin{pmatrix}
    B_j & A_j
\end{pmatrix}}$, and of the inequality constraints $G_j \defeq {\begin{pmatrix}
    D_j & C_j
\end{pmatrix}}$ and $G_N \defeq C_N$. The Hessian matrices of the stage cost functions $l_j(u^j, x^j)$
are denoted by $H^l_j \defeq \left(\!\begin{smallmatrix}
    R_j & S_j \\ \ttp S_j & Q_j
\end{smallmatrix}\!\right)$, and $H^l_N \defeq Q_N$.

In each inner iteration of \QPALMOCP{}, the Newton system
\cite[Eq.~16]{lowenstein_qpalm-ocp_2024} needs to be solved at the current iterate $x$.
This is equivalent to solving the following equality constrained quadratic program
with optimal control structure:
\def\dx{\Delta\hspace{-0.2pt}x}
\def\du{\Delta\hspace{-0.8pt}u}
\begin{equation} \label{eq:ocp-qpalm-inner}
    \hspace{-1em}
    \begin{aligned}
        &\minimize_{\du,\dx} && \textstyle \sum_{j=0}^{N-1}\; l^{\setJ(x)}_j\!(\du^j, \dx^j) \;+\; l^{\setJ(x)}_N(\dx^N) \\[-0.3em]
        &\subjto && \dx^{j+1} = A_j \dx^j + B_j \du^j + e^j(x), \\
        &&& \dx^0 = x_{\text{init}} - x^0,
    \end{aligned} \hspace{-2em}
\end{equation}
where the Newton step is given by the minimizer $(\du,\dx)$.
The cost functions $l^{\setJ(x)}_j$ are quadratic
with
$\hess l^{\setJ(x)}_j = H^\setJ_j\!(x) \defeq H^l_j + \ttp G_j \Sigma^{\setJ(x)}_{j} G_j$.
\cite[Eq.~18]{lowenstein_qpalm-ocp_2024}.
The diagonal ALM penalty matrices $\Sigma^{\setJ(x)}_{j}$ depend on the active set $\setJ(x)$ \cite[Eq.~14]{lowenstein_qpalm-ocp_2024}
at the current iterate $x$: entries $(\Sigma^{\setJ(x)})_{ii}$ are set to a positive penalty parameter if they correspond to an active inequality constraint, and zero otherwise.
Assuming that only a small number of constraints enter or leave the active set between successive iterates $x$ and $\tilde x$, the matrices
$H^\setJ_j$ undergo a low-rank update $H^\setJ_j\!(\tilde x) = H^\setJ_j\!(x) + \ttp G_j (\tilde\Sigma^{\setJ}_{j} - \Sigma^{\setJ}_{j}) G_j$,
where the penalty matrices $\Sigma^{\setJ}_{j} \defeq \Sigma^{\setJ(x)}_{j}$ and $\tilde\Sigma^{\setJ}_{j} \defeq \Sigma^{\setJ(\tilde x)}_{j}$ differ in a small number of entries only.
This motivates using update routines 
to avoid full factorizations
at each iteration.

The original implementation of \QPALMOCP{} uses a Schur complement method for
solving the Newton system, with factorization updates based on \cite{gill_methods_1974} and the Sherman--Morrison formula.
In the present paper, we instead consider the well-known Riccati recursion for solving problems of the form \eqref{eq:ocp-qpalm-inner}.
We make use of the factorized variant proposed in \cite{frison_efficient_2013}: the factorization procedure
is given in \Cref{alg:fact-riccati}, with some ALM-specific modifications. Using this factorization,
the Newton step can be recovered as described in \cite[Alg.~2]{frison_efficient_2013}.

We now present \underline{\Cref{alg:fact-upd-riccati}}, a novel method for efficiently updating
the Cholesky factors computed by \Cref{alg:fact-riccati}:
Given the factors $\LPuu{j}, \LPxu{j}, \LPxx{j}$ of matrices $H_j$ computed by \Cref{alg:fact-riccati}
for the iterate $x$ with penalties $\Sigma^\setJ_j$, \Cref{alg:fact-upd-riccati} obtains the updated factors
$\tilLPuu{j}, \tilLPxu{j}, \tilLPxx{j}$ that can be used to compute the Newton step at a new
iterate $\tilde x$ with penalties $\tilde\Sigma^\setJ_j$. Imporantly, it does so without actually executing \Cref{alg:fact-riccati} at $\tilde x$,
resulting in a lower run time.

\begin{proof}
Denote by $\tilde H_j$ the matrices that would be computed on \cref{ln:H-riccati} of \Cref{alg:fact-riccati},
if applied at $\tilde x$ with $\tilde\Sigma^\setJ_j$.  We will show inductively that
\Cref{alg:fact-upd-riccati} computes the factors $\left(\!\begin{smallmatrix}
\tilLPuu{j} \\ \tilLPxu{j} & \tilLPxx{j}
\end{smallmatrix}\!\right) \defeq \operatorname{chol}(\tilde H_j)$:
As the induction hypothesis, we state that the cost-to-go matrices $P_j = \LPxx{j}\LPTxx{j}$ undergo a low-rank update between $x$ and $\tilde x$:
$\tilde P_{j} = \tilLPxx{j}\tilLPTxx{j} = P_{j} + \Phi_{j} \Ssign_{j} \ttp \Phi_{j}$ for some $\Phi_j$ and $\Ssign_j$.
At the final stage $N$, we have $\tilde P_N = P_N + \ttp G_N (\tilde\Sigma^{\setJ}_N - \Sigma^{\setJ}_N) G_N$,
covering the base case with $\Phi_N = \ttp G_N$ and $\Ssign_N = \tilde\Sigma^{\setJ}_{N} - \Sigma^{\setJ}_{N}$.
This update is readily performed by \Cref{alg:stab-hyp-house-diag-block} to obtain
$\tilLPxx{N}$.
For all previous stages $j\lt N$, there are two terms in the definition of matrix $H_j$ (Alg.\,\ref{alg:fact-riccati}, \cref{ln:H-riccati}) that may change between $x$ and $\tilde x$:
the second term due to changes to the penalty matrix $\tilde\Sigma^{\setJ}_{j}$ in the current stage,
and the third term due to changes to the cost-to-go matrix
$\tilde P_{j+1} = \tilLPxx{j+1}\tilLPTxx{j+1}$ in the next stage.
We have that $\tilde H_j = H_j + \ttp G_j (\tilde\Sigma^\setJ_j - \Sigma^\setJ_j) G_j + \ttp F_j (\tilde P_{j+1} - P_{j+1}) F_j$.
Using the induction hypothesis
$\tilde P_{j+1} = P_{j+1} + \Phi_{j+1} \Ssign_{j+1} \ttp \Phi_{j+1}$,
we have $\tilde H_j = H_j + \Upsilon_j \Ssign_j \ttp \Upsilon_j$, with $\Upsilon_j \defeq \left(\ttp F_j \Phi_{j+1}\;\;\; \ttp G_j\right)$
and $\Ssign_j \defeq \operatorname{blkdiag}(\Ssign_{j+1},\; \tilde\Sigma^\setJ_j - \Sigma^\setJ_j)$.
To compute the Cholesky factorization of $\tilde H_j$, we rely on the blocked algorithm outlined in \Cref{sec:blocked-alg},
as written out in detail on \cref{ln:riccati-upd-uu,ln:riccati-upd-xu,ln:riccati-upd-xx} of \Cref{alg:fact-upd-riccati}.
Finally, \cref{ln:riccati-upd-xx} implies that
$\tilde P_j = \tilLPxx{j}\tilLPTxx{j} = \LPxx{j}\LPTxx{j} + \Phi_j \Ssign_j \ttp \Phi_j = P_j + \Phi_j \Ssign_j \ttp \Phi_j$,
completing the proof.
\end{proof}

\begin{algorithm2e}
    \small
    \caption{Riccati factorization \cite[Alg.~3]{frison_efficient_2013}}
    \label{alg:fact-riccati}
    \setstretch{1.12}
    \DontPrintSemicolon
    \KwIn{\hspace{-1pt}%
        $\{H^l_j\}_{j=0}^{N-1}, Q_N$ (Hessians of OCP cost), \\
        $\{F_j\}_{j=0}^{N-1}, \{G_j\}_{j=0}^N$ (dynamics \& constraint Jacobians), \\
        $\{\Sigma^\setJ_j\}_{j=0}^N$ (ALM penalty factors).
        }
    \KwOut{\hspace{-1pt}%
        $\{\LPuu{j}\}_{j=0}^{N-1}, \{\LPxu{j}\}_{j=0}^{N-1}, \{\LPxx{j}\}_{j=0}^N$ \\
        \quad(Cholesky factors).}
    \vspace{0.4em}
        $P_N \assign Q_N + \ttp G_N \Sigma^\setJ_N G_N$\;
        $\LPxx{N} \assign \operatorname{chol}(P_N)$\;
        \For{$j = N-1, N-2,\; ...\; 0$}{
            $H_j \assign H^l_j + \ttp G_j \Sigma^{\setJ}_{j} G_j + \ttp F_j\, \LPxx{j+1} \LPTxx{j+1}\,F_j$\;\label{ln:H-riccati}
            \setstretch{0.9}$\begin{pmatrix}
                \LPuu{j} \\ \LPxu{j} & \LPxx{j}
            \end{pmatrix} \assign \operatorname{chol}(H_j)$\;\label{ln:chol-H-riccati}
        }
\end{algorithm2e}
\begin{algorithm2e}
    \small
    \caption{Riccati factorization updates}
    \label{alg:fact-upd-riccati}
    \setstretch{1.12}
    \DontPrintSemicolon
    \KwIn{\hspace{-1pt}%
        $\{F_j\}_{j=0}^{N-1}, \{G_j\}_{j=0}^N$ (dynamics \& constraint Jacobians), \\
        $\{\Sigma^\setJ_j\}_{j=0}^N, \{\tilde\Sigma^\setJ_j\}_{j=0}^N$ (old and new penalty factors), \\
        $\{\LPuu{j}\}_{j=0}^{N-1}, \{\LPxu{j}\}_{j=0}^{N-1}, \{\LPxx{j}\}_{j=0}^N$ \\
        \quad(old Cholesky factors).
        }
    \KwOut{\hspace{-1pt}%
        $\{\tilLPuu{j}\}_{j=0}^{N-1}, \{\tilLPxu{j}\}_{j=0}^{N-1}, \{\tilLPxx{j}\}_{j=0}^N$ \\
        \quad(updated Cholesky factors).}
    \vspace{0.4em}
        $\Phi_N \assign \ttp G_N$,\qquad
        $\Ssign_N \assign \tilde\Sigma^\setJ_N - \Sigma^\setJ_N$\;
        $\begin{pmatrix}
            \tilLPxx{N} & 0
        \end{pmatrix} \assign \begin{pmatrix}
            \LPxx{N} & \Phi_N
        \end{pmatrix} \Qx{N}$
        using Alg.\,\ref{alg:stab-hyp-house-diag-block}, \hspace{8cm}
        \phantom{m} where $\Qx{N}$ is $\left(\!\begin{smallmatrix}
            \I \\ & \Ssign_N
        \end{smallmatrix}\!\right)$-orthogonal\;
        \For{$j = N-1, N-2,\; ...\; 0$}{
            $\begin{pmatrix}
                \Upsu{j} \\ \Upsx{j}
            \end{pmatrix} \assign \begin{pmatrix}
                \ttp F_j \Phi_{j+1} & \!\ttp G_j
            \end{pmatrix}$,
            \quad
            $\Ssign_j \assign \left(\!\begin{smallmatrix}
                \Ssign_{j+1} \\ & \tilde\Sigma^\setJ_j - \Sigma^\setJ_j
            \end{smallmatrix}\right)$\;
            $\begin{pmatrix}
                \tilLPuu{j} & 0{\hspace{1.8pt}}
            \end{pmatrix} \assign \begin{pmatrix}
                \LPuu{j} & \Upsu{j}
            \end{pmatrix} \Qu{j}$
            using Alg.\,\ref{alg:stab-hyp-house-diag-block}, \hspace{8cm}
            \phantom{m} where $\Qu{j}$ is $\left(\!\begin{smallmatrix}
                \I \\ & \Ssign_j
            \end{smallmatrix}\!\right)$-orthogonal\;\label{ln:riccati-upd-uu}
            $\begin{pmatrix}
                \tilLPxu{j} & \hspace{-4pt}\Phi_j
            \end{pmatrix} \assign \begin{pmatrix}
                \LPxu{j} & \Upsx{j}
            \end{pmatrix} \Qu{j}$
            using Alg.\,\ref{alg:stab-hyp-house-tail-block}\;\label{ln:riccati-upd-xu}
            $\begin{pmatrix}
                \tilLPxx{j} & {0\hspace{2.8pt}}
            \end{pmatrix} \assign \begin{pmatrix}
                \LPxx{j} & {\hspace{2.2pt}} \Phi_j {\hspace{3pt}}
            \end{pmatrix} \Qx{j}$
            using Alg.\,\ref{alg:stab-hyp-house-diag-block}, \hspace{8cm}
            \phantom{m} where $\Qx{j}$ is $\left(\!\begin{smallmatrix}
                \I \\ & \Ssign_j
            \end{smallmatrix}\!\right)$-orthogonal\;\label{ln:riccati-upd-xx}
        }
\end{algorithm2e}

\section{Performance of optimized updating routines} \label{sec:performance}

\begin{figure}
    \centering
    \includegraphics[width=83mm, clip, trim=2.5mm 3.5mm 3.7mm 3mm]{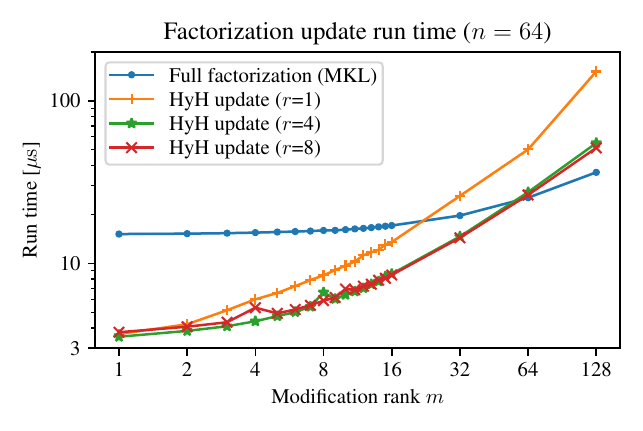}
    \caption{Absolute run time of different factorization update methods, for $L\in \R^{64\times64}, A\in \R^{64\times m}$.
    \textit{Full factorization} refers to adding $A\Sigma \ttp A$ to the previous matrix $H$ (\texttt{dsyrk})
    and performing a fresh Cholesky factorization (\texttt{dpotrf}).\\
    \textit{HyH update} refers to the blocked hyperbolic Householder-based algorithm from \S\ref{sec:blocked-alg}, for various block sizes $r$.\vspace{-0.3em}}
    \label{fig:shh-timings}
\end{figure}
The blocked algorithm from \Cref{sec:blocked-alg} has been implemented in \Cpp{},
with highly optimized and vectorized micro-kernels for \Cref{alg:stab-hyp-house-diag-block,alg:stab-hyp-house-tail-block}.
The source code has been made available on {\small \url{https://github.com/kul-optec/hyhound}}.
\Cref{fig:shh-timings} compares the run times of this implementation to those of a full factorization without low-rank updates, for various ranks and using different block sizes.
\footnote{Experiments were carried out on an Intel Core i7-11700 at 2.5 GHz (fixed), using v2025.0 of the Intel MKL.}

For small modification ranks $m$, the hyperbolic Householder factorization updates are over four times faster than a full factorization.
For $m>2$, the blocked variants ($r$\,=\,4,\,8) are significantly faster than the non-blocked variant ($r$\,=\,1).
This motivates the use of blocked hyperbolic Householder transformations over the column-wise methods from \cite{gill_methods_1974}.

\begin{figure}
    \centering
    \includegraphics[width=83mm, clip, trim=2.5mm 3.5mm 3.7mm 3mm]{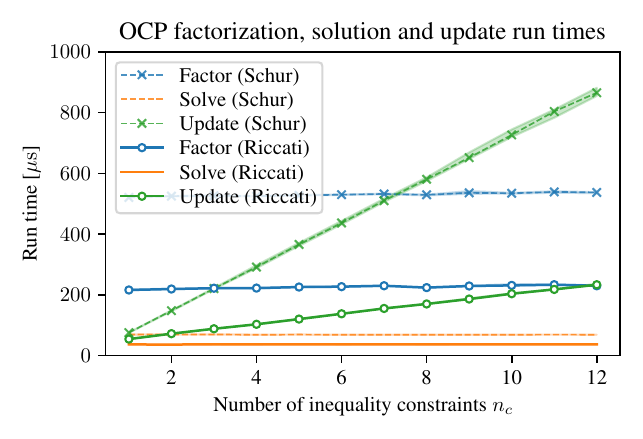}
    \caption{Run time comparison of the individual factorization, solution, and factorization update steps when solving \eqref{eq:ocp-qpalm-inner},
    for the Schur complement method versus the Riccati recursion from \S\ref{sec:riccati}.
    OCP dimensions: $N\!=\!24,n_x\!=\!24, n_u\!=\!8$.}
    \label{fig:ocp-timings}
\end{figure}

Finally, we compare the performance of the Schur complement method from \cite{lowenstein_qpalm-ocp_2024} and
the Riccati recursion algorithms presented in \Cref{sec:riccati} when applied to the semismooth Newton
system \eqref{eq:ocp-qpalm-inner} of \QPALMOCP{}. The system is solved for randomly generated benchmark OCPs with varying numbers of
inequality constraints $n_c$. As a benchmark, the penalty factors of $25\%$ of the inequality constraints (selected at random) are changed between $x$
and $\tilde x$, resulting in larger modification ranks as the number of constraints increases.

\Cref{fig:ocp-timings} demonstrates that the Riccati-based solver significantly outperforms the Schur complement approach.
In particular, when combined with the proposed factorization update procedure, it provides an efficient method for solving successive Newton systems with an optimal control structure.

\bibliographystyle{IEEEtranS}
\bibliography{QPALM-OCP}

\begin{thebibliography}{10}
\providecommand{\url}[1]{#1}
\csname url@rmstyle\endcsname
\providecommand{\newblock}{\relax}
\providecommand{\bibinfo}[2]{#2}
\providecommand\BIBentrySTDinterwordspacing{\spaceskip=0pt\relax}
\providecommand\BIBentryALTinterwordstretchfactor{4}
\providecommand\BIBentryALTinterwordspacing{\spaceskip=\fontdimen2\font plus
\BIBentryALTinterwordstretchfactor\fontdimen3\font minus
  \fontdimen4\font\relax}
\providecommand\BIBforeignlanguage[2]{{%
\expandafter\ifx\csname l@#1\endcsname\relax
\typeout{** WARNING: IEEEtran.bst: No hyphenation pattern has been}%
\typeout{** loaded for the language `#1'. Using the pattern for}%
\typeout{** the default language instead.}%
\else
\language=\csname l@#1\endcsname
\fi
#2}}

\bibitem{bernstein_matrix_2009}
D.~S. Bernstein, \emph{\BIBforeignlanguage{en}{Matrix mathematics: theory,
  facts, and formulas}}, 2nd~ed.\hskip 1em plus 0.5em minus 0.4em\relax
  Princeton, N.J: Princeton university press, 2009.

\bibitem{bojanczyk_stability_1991}
A.~W. Bojanczyk and A.~O. Steinhardt, ``Stability {Analysis} of a
  {Householder}-{Based} {Algorithm} for {Downdating} the {Cholesky}
  {Factorization},'' \emph{SIAM Journal on Scientific and Statistical
  Computing}, vol.~12, no.~6, pp. 1255--1265, Nov. 1991.

\bibitem{frey_active-set_2020}
J.~Frey, S.~Di~Cairano, and R.~Quirynen, ``\BIBforeignlanguage{en}{Active-{Set}
  based {Inexact} {Interior} {Point} {QP} {Solver} for {Model} {Predictive}
  {Control}},'' \emph{\BIBforeignlanguage{en}{IFAC-PapersOnLine}}, vol.~53,
  no.~2, pp. 6522--6528, 2020.

\bibitem{frison_efficient_2013}
G.~Frison and J.~B. Jørgensen, ``Efficient implementation of the {Riccati}
  recursion for solving linear-quadratic control problems,'' in \emph{2013
  {IEEE} {International} {Conference} on {Control} {Applications} ({CCA})},
  Aug. 2013, pp. 1117--1122.

\bibitem{gill_methods_1974}
P.~E. Gill, G.~H. Golub, W.~Murray, and M.~A. Saunders,
  ``\BIBforeignlanguage{en}{Methods for modifying matrix factorizations},''
  \emph{\BIBforeignlanguage{en}{Mathematics of Computation}}, vol.~28, no. 126,
  pp. 505--535, 1974.

\bibitem{hermans_qpalm_2022}
B.~Hermans, A.~Themelis, and P.~Patrinos, ``\BIBforeignlanguage{en}{{QPALM}: a
  proximal augmented lagrangian method for nonconvex quadratic programs},''
  \emph{\BIBforeignlanguage{en}{Mathematical Programming Computation}},
  vol.~14, no.~3, pp. 497--541, Sept. 2022.

\bibitem{horn_matrix_2012}
R.~A. Horn and C.~R. Johnson, \emph{\BIBforeignlanguage{en}{Matrix analysis}},
  2nd~ed.\hskip 1em plus 0.5em minus 0.4em\relax Cambridge, New York: Cambridge
  University Press, 2012.

\bibitem{kirches_factorization_2011}
C.~Kirches, H.~G. Bock, J.~P. Schlöder, and S.~Sager,
  ``\BIBforeignlanguage{en}{A factorization with update procedures for a {KKT}
  matrix arising in direct optimal control},''
  \emph{\BIBforeignlanguage{en}{Mathematical Programming Computation}}, vol.~3,
  no.~4, pp. 319--348, Dec. 2011.

\bibitem{lowenstein_qpalm-ocp_2024}
K.~F. Lowenstein, D.~Bernardini, and P.~Patrinos,
  ``\BIBforeignlanguage{en}{{QPALM}-{OCP}: {A} {Newton}-{Type} {Proximal}
  {Augmented} {Lagrangian} {Solver} {Tailored} for {Quadratic} {Programs}
  {Arising} in {Model} {Predictive} {Control}},''
  \emph{\BIBforeignlanguage{en}{IEEE Control Systems Letters}}, pp. 1--1, 2024.

\bibitem{nielsen_low-rank_2018}
I.~Nielsen and D.~Axehill, ``Low-{Rank} {Modifications} of {Riccati}
  {Factorizations} for {Model} {Predictive} {Control},'' \emph{IEEE
  Transactions on Automatic Control}, vol.~63, no.~3, pp. 872--879, Mar. 2018.

\bibitem{puglisi_modification_1992}
C.~Puglisi, ``\BIBforeignlanguage{en}{Modification of the {Householder}
  {Method} {Based} on the {Compact} {WY} {Representation}},''
  \emph{\BIBforeignlanguage{en}{SIAM Journal on Scientific and Statistical
  Computing}}, vol.~13, no.~3, pp. 723--726, May 1992.

\bibitem{rader_hyperbolic_1986}
C.~Rader and A.~Steinhardt, ``Hyperbolic householder transformations,''
  \emph{IEEE Transactions on Acoustics, Speech, and Signal Processing},
  vol.~34, no.~6, pp. 1589--1602, Dec. 1986.

\bibitem{schreiber_storage-efficient_1989}
R.~Schreiber and C.~Van~Loan, ``A {Storage}-{Efficient} {WY} {Representation}
  for {Products} of {Householder} {Transformations},'' \emph{SIAM Journal on
  Scientific and Statistical Computing}, vol.~10, no.~1, pp. 53--57, Jan. 1989.

\bibitem{stewart_hyperbolic_1998}
M.~Stewart and G.~W. Stewart, ``\BIBforeignlanguage{en}{On {Hyperbolic}
  {Triangularization}: {Stability} and {Pivoting}},''
  \emph{\BIBforeignlanguage{en}{SIAM Journal on Matrix Analysis and
  Applications}}, vol.~19, no.~4, pp. 847--860, Oct. 1998.

\bibitem{trefethen_numerical_1997}
L.~N. Trefethen and I.~Bau, David, \emph{Numerical {Linear} {Algebra}}, ser.
  Other {Titles} in {Applied} {Mathematics}.\hskip 1em plus 0.5em minus
  0.4em\relax Society for Industrial and Applied Mathematics, Jan. 1997.

\bibitem{van_de_geijn_high-performance_2011}
R.~A. Van De~Geijn and F.~G. Van~Zee, ``High-performance up-and-downdating via
  householder-like transformations,'' \emph{ACM Trans. Math. Softw.}, vol.~38,
  no.~1, pp. 4:1--4:17, Dec. 2011.

\bibitem{wright_applying_1996}
S.~J. Wright, ``\BIBforeignlanguage{English}{Applying new optimization
  algorithms to more predictive control},'' Argonne National Lab. (ANL),
  Argonne, IL (United States), Tech. Rep. MCS-P-561-0196; CONF-960178-1, Mar.
  1996.

\end{thebibliography}

\end{document}